\newtheorem{mydef}{Definition}
\newtheorem{mylem}{Lemma}
\newtheorem{myrem}{Remark}
\newtheorem{mythm}{Theorem}
\newtheorem{myprop}{Proposition}
\title{On node controllability and observability in complex dynamical networks.}
\author[1]{Francesco Lo Iudice}
\author[2]{Francesco Sorrentino}
\author[3]{Franco Garofalo}
\affil[1]{Department of Electrical Engineering and Information Technology,\\ University of Naples Federico II, Naples 80125, Italy}
\affil[2]{Department of Mechanical Engineering,\\ MSC01 1150
1 University of New Mexico, Albuquerque, NM 87131, USA}
\affil[3]{Department of Electrical Engineering and Information Technology,\\ University of Naples Federico II, Naples 80125, Italy}
\begin{document}
\maketitle

\begin{abstract}
We analyze in detail the subtle yet critical differences between the structural controllability and observability of the triplet $(A,B,C)$ in the two cases that this is viewed as a linear dynamical network of interconnected nodes or as a a single complex system. Investigating the controllability and observability properties of each single node when the network is not completely controllable and/or observable, we show that the first point of view requires the development of novel tools leading, ultimately, to a state space decomposition that is different from the one proposed in 1963 by R.E. Kalman for linear systems.
\end{abstract}

\section{Introduction}

The spectrum of real world systems that are modeled as complex dynamical networks is ever increasing, spanning from power grids, to financial networks  \cite{BuSp:09,PaAi:13,DeDiLo:18}. Our ability of controlling these networks towards a desired state is a topic that has attracted remarkable interest in the scientific community \cite{CoKa:13,LiJi:18,KlSh:17_b,BuDi:16}, leading researchers to tackle diverse problems such as ensuring complete network controllability \cite{LiSlBa:11,PePa:17,LiGo:18,LiDe:18,YuZh:13} or computing the minimal effort required to control a network \cite{pasqualetti2014controllability,yan2015spectrum,KlShSo:17}. A common trait among these studies is that of revisiting the fundamentals of dynamical systems theory to allow coping with large dynamical networks. 

Surprisingly, a fundamental tool that has been overlooked in these studies is the Kalman decomposition. By unveiling the portion of the state space that is made controllable by the system inputs and that is made observable through the available measurements, this tool gives the control designer a rather clear idea of the limitations to which the control action is subject. Hence, the following question naturally arises:
what insight can the Kalman decomposition provide on the controllability and observability of a large dynamical network? 
To give an answer to this question, first of all, we must consider that, often, a dynamic network develops autonomously and the need to control it arises at an advanced stage of its growth. Think for instance of power grids or traffic networks, which grow together with the cities, or nations, they serve or of a group of cells of a body organ that need a therapeutic interventions. Since such networks are not specifically designed to be controlled, two preliminary problems have to be solved. The first one is to establish in which nodes the control signals have to be injected. In the literature this is often referred to as the driver nodes selection problem \cite{LiSlBa:11,LiAl:17,LiGaSo,gaLi:14,PePr:17,ClBa:17}. The second problem is, obviously, the selection of the nodes that can be sensorized in order to get the measurements needed for the observation of the state of others nodes and for the synthesis of the control action. Differently from the first one, this second problem has attracted less attention from the researchers. When the number of actuators and sensors that can be deployed on the network is limited and the number of the nodes is large it can well be the case that the resulting network, seen as a linear dynamical system, lacks in complete controllability and/or complete observability. In this situation two problems naturally arise: (a) how can we find the \emph{set of nodes that can be controlled} and (b) how can we find the \emph{set of nodes whose state can be observed}. The first problem has been solved, see e.g. \cite{LiGaSo,gaLi:14,LiSlBa:12}, leveraging the structural approach proposed in \cite{Ho:80}, that is, leaving out of consideration the specific values of the network parameters. However, this solution highlights what apparently seems a contradiction as the set of controllable nodes depends only on the network structure, while it is well known that the controllable subspace of a dynamical system depends on the values of the system parameters. As for problem (b), a careful analysis of the literature shows that a solution is lacking, although, in systems theory, observability and controllability are geometrically dual concepts. 

In this paper, we will first clear up the apparent contradiction in the solution of problem (a) and then, extending the same reasoning, we will provide a solution to problem (b). In doing so, we will reach the striking conclusion that solving problems (a) and (b) does not boil down to finding the Kalman decomposition of the system state space. 
The reason is subtle but simple: following the Kalman approach, the controllability and observability properties are investigated through an ad hoc transformation of the system state representation. In the new basis the controllable and observable subsystems become visible but the physical meaning of the original system state is lost. 
When dealing with linear networks, instead, the process is somehow reversed. As the focus is on finding the states of the nodes that are controllable and observable, one must stick with the basis that associates a node to each of its elements, and then express the controllable and observable subnetworks through the elements of such basis. In turn, this constrains the transformations that can be used to perform the state space decomposition.  

Summing up, in this paper we show that the differences between what can be called the \textit{system state space decomposition} and the \textit{network state space decomposition} only emerge when we cope with partial controllability and observability. The new approach we propose in this paper will lead to the non uniqueness of the network state space decomposition and to the identification of  some interesting network subspaces: the one defined by the nodes that are not controlled but are perturbed by the control action and that generated by the intersection of the set of the observable system states and the network non observable subspace.

\section{Preliminaries}
We consider the linear ordinary differential equation
\begin{align}\label{eq:net_eq}
\dot{x} = Ax+Bu\\
y=Cx\nonumber
\end{align}
where the vectors $x\in \mathbb{R}^N$, $u\in\mathbb{R}^M$, and $y\in \mathbb{R}^P$. In this paper, we are going to consider the following two alternative interpretations of Eq. (1).
\newline \emph{Interpretation 1: Eq. \eqref{eq:net_eq} is a dynamical system.} The real matrix $A$ defines the system dynamics, the matrix $B$ represents the effect of the $M$ inputs in the vector $u$ on the state variables, and the matrix $C$ defines which $P$ linear combinations of the state variables are measured and thus consitute the output vector $y$.
\newline \emph{Interpretation 2: Eq. \eqref{eq:net_eq} is a dynamical network.} The real matrix $A = \lbrace a_{ij}\rbrace_{i,j=1}^N$ describes the node intrinsic dynamics and the network connectivity. Namely, the diagonal elements of the matrix $A$ define the node intrinsic dynamics, while if the $ij$-th element of $A$, $i\neq j$, is different from zero then there is an edge connecting node $v_j$ to node $v_i$. Accordingly, we define the graph associated to the matrix $A$, say $\mathcal{G}(A)$ as the set of nodes $\mathcal{V} = \lbrace v_1, \dots , v_N \rbrace$, and the set of edges $\mathcal{E}$, where $(i,j)\in \mathcal{E}$ iff $a_{ij}\neq 0$. In this paper, we will represent the intrinsic node dynamics as self loops in the graph $\mathcal{G}$, i.e., connections from a node to itself. The vector $u \in \mathbb{R}^M$ in eq \eqref{eq:net_eq} describes the $N_D$ input signals injected in a subset of the network nodes, the \emph{drivers}, identified by the matrix $B$; if the $ij$-th element of the matrix $B$ is different from zero, then the $j$-th input signal is injected in the $i$-th network node. Here, we assume that each one of the columns of the matrix $B$ encompasses only one nonzero entry \cite{LiGaSo}. Finally, the vector $y \in \mathbb{R}^P$ should be interpreted as the stack vector of the measured node states, that is, the state of the nodes where the sensors are placed (the \emph{sensor nodes}). Consistently, each row $c_i$ of the matrix $C$ is a versor with only one nonzero entry in the $j$-th position to indicate that node $v_j$ is a sensor node.

In what follows, we will make use of the following definition. 
\begin{mydef}\label{def:path}
We denote by $\pi_{ji}(k)$ the path of length $k$ from node $v_i$ to node $v_j$, that is, the sequence of $k$ edges $(i,r_1),(r_1,r_2),\dots, (r_{k-1},j)\rbrace$. Moreover, we define the weight of the path $\pi_{ji}(k)$
$$w_{ji}(k):=\prod_{(r_l,r_{l+1})\in\pi_{ji}(k) } a_{r_l,r_{l+1}}.$$
\end{mydef}
Next, we provide some background on the theory of structural controllability \cite{Lin:74}, \cite{Ho:80}, \cite{ShBP:75} . We start by defining an entry of a matrix as fixed, if its value is constrained to be zero, or free, if it can take an arbitrary value. Then, we can say that two matrices share the same structure if they share the positions of the fixed and free entries. This leads to introducing the concept of a structured matrix, that is, a matrix with fixed and free entries, the latter being indeterminates \cite{ShBP:75}. We are now ready to introduce the following result from generic analysis \cite{Fro:12,Ko:31}.
\begin{mylem}\label{lem:gen_an}
The generic rank of a matrix, that is, the rank the matrix takes for all selections of its free entries except for a set of Lebesgue measure zero, is equal to the maximal number of independent free entries of the matrix, where a set of free entries is said to be independent if no two lie on the same row, nor on the same column.
\end{mylem}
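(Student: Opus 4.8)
The plan is to prove the two inequalities separately, after identifying the generic rank with the largest $r$ for which some $r \times r$ minor, viewed as a polynomial in the free entries, does not vanish identically. Throughout I treat the free entries as algebraically independent indeterminates, as dictated by the notion of a structured matrix, and I write $t$ for the quantity on the right-hand side, the maximal number of independent free entries (the \emph{term rank}). The equivalence between the two definitions of generic rank is immediate: any minor of size exceeding $r$ is the zero polynomial and hence vanishes for every selection, while the nonvanishing minor of size $r$ is nonzero off a set of measure zero, so almost every selection yields rank exactly $r$.

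First I would show that the generic rank is at most $t$. Suppose some $r \times r$ submatrix has a determinant that is not identically zero. Expanding it by the Leibniz formula, $\det = \sum_\sigma \mathrm{sgn}(\sigma)\prod_i a_{i\,\sigma(i)}$, at least one term must be nonzero, which forces every factor $a_{i\,\sigma(i)}$ in that term to be a free entry. The entries selected by such a permutation $\sigma$ are $r$ free entries lying in distinct rows and distinct columns, so by the definition of $t$ we get $r \le t$, and hence the generic rank is at most $t$.

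For the reverse inequality I would exhibit a single minor that is a nonzero polynomial. By definition of $t$ there exist $t$ free entries $a_{i_1 j_1}, \dots, a_{i_t j_t}$ with the rows $i_1,\dots,i_t$ pairwise distinct and the columns $j_1,\dots,j_t$ pairwise distinct. Consider the $t \times t$ submatrix supported on these rows and columns. Its determinant, expanded again via Leibniz, contains the monomial $\pm\prod_k a_{i_k j_k}$ arising from the permutation that matches each chosen row to its chosen column. The crucial step, and the one I expect to be the real content of the argument, is to show that this monomial survives: since each free position carries its own independent indeterminate, distinct transversals of the submatrix produce distinct monomials, so no other term of the expansion can coincide with or cancel this one. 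Therefore the determinant is a nonzero polynomial in the free entries.

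Finally I would invoke the standard fact that the zero set of a nontrivial polynomial in finitely many real variables has Lebesgue measure zero. Consequently the $t \times t$ minor constructed above is nonzero for all choices of the free entries outside a set of measure zero, so the rank is at least $t$ generically. Combining this with the first bound shows that the generic rank equals $t$, which is the claim.
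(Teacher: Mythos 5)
Your proof is correct. Note, however, that the paper does not actually prove this lemma: it is stated as a known result from generic analysis, with citations to Frobenius and K\H{o}nig, so there is no in-paper argument to compare against. Your write-up supplies the standard self-contained proof of that classical fact, and both directions are sound. The upper bound is immediate as you say: if every transversal of an $r \times r$ submatrix met a fixed (zero) entry, every Leibniz term would vanish and the minor would be the zero polynomial, so a nonvanishing minor yields $r$ independent free entries. For the lower bound, your key observation --- that a permutation is determined by its set of cells, each free cell carries its own indeterminate, and hence distinct transversals contribute distinct monomials that cannot cancel --- is exactly what makes $\pm\prod_k a_{i_k j_k}$ survive in the $t \times t$ minor; this step genuinely uses the algebraic independence of the free entries, which is the structured-matrix convention the paper adopts. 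Combined with the measure-zero zero set of a nontrivial polynomial, and your preliminary identification of the generic rank with the largest order of a not-identically-vanishing minor (rank $\le r$ everywhere since larger minors vanish identically, rank $\ge r$ off a null set), the conclusion follows. One presentational point: you assume each free entry is a distinct indeterminate; if a structured-matrix model allowed repeated parameters across positions, the no-cancellation claim could fail, so it is good that you flagged this hypothesis explicitly at the outset, since the lemma as stated (and as used in the paper) relies on it.
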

Note that a structured matrix is only endowed of a generic rank, while a matrix of which we not only know the structure, but also the values of the free entries is endowed of a rank and of a generic rank. The generic rank of a matrix coincides with the maximal rank a matrix with the same structure can take as we vary the values of its free entries.

Since the matrix $A$ in \eqref{eq:net_eq} can be interpreted as an adjacency matrix, so can its transpose $A^T$, which allows us to define the graph $\mathcal{G}^T$. Note that $\mathcal{G}^T$ corresponds to the network graph with reversed edges and thus, coherently with Interpretation 2, it is unequivocally defined by the structure of the matrix $A$. The observability matrix of the dynamical system \eqref{eq:net_eq} is defined as
\begin{equation}\label{eq:O_mat}
O = 
\left[\begin{array}{c}
C\\
CA\\
CA^2\\
\vdots\\
CA^{N-1}
\end{array}\right].
\end{equation}
Note that the $ij$-th element of the matrix $A^k$ is free iff, in $\mathcal{G}^T$, there exists at least a path of length $k$ from node $v_i$ to node $v_j$. Hence, the matrix $O$ admits a straightforward interpretation in terms of paths on the graph $\mathcal{G}^T$: the $i$-th element of each row of the matrix $O$, that is, $(c_jA^k)_i$, is nonzero iff, in $\mathcal{G}^T$ there exists at least a path of length $k$ from the $j$-th sensor to the node $v_i$. As there can be multiple paths, say $L$, of length $k$ from $v_j$ to $v_i$, we have that
\begin{equation}\label{eq:path_weight}
(c_jA^k)_i = \sum_{l=1}^L w^l_{ij}(k)
\end{equation}
where the superscript $l$ accounts for the multiplicity of the paths. Eq. \eqref{eq:path_weight} links each column of $O$, say column $j$, to the network node $v_j$ as each of its elements is a sum of weights of the paths to node $v_j$. We anticipate that performing elementary row transformations on the matrix $O$, as will be done in what follows, destroys the interpetation of its elements as weights of paths on a graph but maintains the link between columns of the matrix $O$ and network nodes.

According to \emph{Interpretation 1} of eq. \eqref{eq:net_eq}, rank$(O)$ defines the dimension of the observable subsystem. If we shift to interpretation 2, and consider eq. \eqref{eq:net_eq} as the dynamics of a network, in Theorem \ref{thm:max_o} we will show that $rank(O)$ does not coincide with the number of observable nodes. This is the reason for which we distinguish between a high dimensional system (Interpretation 1) and a linear dynamical network (Interpretation 2), a distinction that may seem subtle, but is indeed crucial when discussing the concepts of controllability and observability. 
\newline We conclude this section by introducing some additional notation. We will denote by
\begin{itemize}
\item $q$, with a slight abuse of notation, both the dimension of the controllable subspace of the pair $(A,B)$ and the dimension of the orthogonal complement of the non observable subspace of the pair $(A,C)$. We will rely on the context to clarify whether we refer to the former or to the latter;
\item $e_i$ the $N$-dimensional versor having a single nonzero entry in its $i$-th position;
\item $\mathcal{N}$ the canonical basis of the network state space, that is, the basis composed of the elements $\lbrace e_i \ \rbrace_{i=1}^N$;
\item $\mathrm{span}(\mathcal{S})$ the linear span of the set of versors $\lbrace e_i:v_i \in \mathcal{S} \rbrace$ with $\mathcal{S}$ any arbitrary set of nodes;
\item $|\mathcal{S}|$ the cardinality of the set $\mathcal{S}$;
\item The symbol $\overline{\mathcal{S}}$ denotes the complement to $\mathcal{V}$ of the set $\mathcal{S}$. 
\item $I_p$ the $p$-dimensional identity matrix.
\end{itemize}

\section{Node Controllability and Observability}
Considering that each network node is a dynamical system of its own (Interpretation 2), here, we define the concepts of node controllability and observability.
\begin{mydef}\label{def:contr}
A node $v_j$ of the dynamical network in eq. \eqref{eq:net_eq} is controllable iff it is possible to steer the value of its state $x_j$ from any initial condition to any target value with a suitable selection of the control signals $u$ in finite time.
\end{mydef}
Note that Definition \ref{def:contr} is coherent with the definition of Structural State Variable Controllability given in \cite{blackhall2010structural}.
\begin{mydef}\label{def:obsv}
A node $v_j$ of the dynamical network in eq. \eqref{eq:net_eq} is observable iff it is possible to reconstruct the value of its state $x_j$ from knowledge of the control signals $u$ and of the measured states $y$ of the sensor nodes.
\end{mydef}
Definitions \ref{def:contr} - \ref{def:obsv} are a direct consequence of the fact that we define a dynamical network as a set of interconnected dynamical systems, the nodes. If one accepts these definitions, then their natural extension to the case of a set of nodes is the following. 
\begin{mydef}\label{def:contr_obs_set}
The set of controllable (observable) nodes $\mathcal{C}$ ($\mathcal{O}$) is defined as the maximal set of nodes that are simultaneously controllable (observable).
\end{mydef}
While definitions \ref{def:contr}-\ref{def:contr_obs_set} are in some sense obvious from a conceptual standpoint, they hide a crucial subtlety from a theoretical standpoint: if the pair $(A,B)$ is not completely controllable, or dually the pair $(A,C)$ is not completely observable, then the Kalman decomposition only allows one to define a set of controllable (observable) state variables $z_i \ i=1,\dots,N$ in a transformed coordinate system. Unfortunately, as definitions \ref{def:contr}-\ref{def:contr_obs_set} refer to the node state variables $x_i$, we cannot evaluate node controllability (observability) after performing a coordinate transformation, as the transformed state variables $z_i$ would not correspond anymore to the network nodes. Hence, finding the mathematical conditions that allows to verify which network nodes are controllable and observable according to definitions \ref{def:contr}-\ref{def:contr_obs_set} is not straightforward. 

Clearly the question arises of which tools can be directly borrowed from systems theory and which, instead, need to be developed for the purpose. The following proposition, provides the first step in answering this question.
\begin{myprop}\label{lem:reach}
The following two facts hold true:
\begin{itemize}
\item[(i)] $|\mathcal{C}|$ always coincides with the dimension of the controllable subspace of the pair $(A,B)$ in eq. \eqref{eq:net_eq};
\item[(ii)] the set $\mathcal{C}$ is not unique.
\end{itemize}
\end{myprop}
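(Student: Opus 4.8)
The plan is to translate the dynamical notion of simultaneous node controllability into a static rank condition on the controllability matrix, and then to read off both claims from the matroid structure of its rows. Throughout I would write $Q=[\,B\ \ AB\ \ \cdots\ \ A^{N-1}B\,]$ for the controllability matrix of the pair $(A,B)$ in \eqref{eq:net_eq}, so that its column space is the controllable subspace $\mathcal{R}$, with $\dim\mathcal{R}=q$. For a node set $\mathcal{S}$ I denote by $P_{\mathcal{S}}$ the coordinate projection extracting the entries indexed by $\mathcal{S}$, so that the sub-vector $(x_j)_{j\in\mathcal{S}}$ is exactly $P_{\mathcal{S}}x$.

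The heart of the argument is the characterization
\[
\mathcal{S}\ \text{simultaneously controllable}\ \iff\ P_{\mathcal{S}}\mathcal{R}=\mathbb{R}^{|\mathcal{S}|}\ \iff\ \text{the rows of }Q\text{ indexed by }\mathcal{S}\text{ are linearly independent.}
\]
To prove the first equivalence I would use that the state reached at time $t$ from $x_0$ is $x(t)=e^{At}x_0+\int_0^t e^{A(t-\tau)}Bu(\tau)\,d\tau$, where, for any fixed horizon $t>0$, the integral term sweeps out exactly $\mathcal{R}$ as the control $u$ varies. Hence the attainable values of $P_{\mathcal{S}}x(t)$ form the affine set $P_{\mathcal{S}}e^{At}x_0+P_{\mathcal{S}}\mathcal{R}$. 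Evaluating Definitions \ref{def:contr}--\ref{def:contr_obs_set} at the initial condition $x_0=0$ collapses this set to the subspace $P_{\mathcal{S}}\mathcal{R}$, so reachability of every target $(\alpha_j)_{j\in\mathcal{S}}$ forces $P_{\mathcal{S}}\mathcal{R}=\mathbb{R}^{|\mathcal{S}|}$; conversely, if $P_{\mathcal{S}}\mathcal{R}=\mathbb{R}^{|\mathcal{S}|}$ then the affine set is all of $\mathbb{R}^{|\mathcal{S}|}$ for every $x_0$. The second equivalence is immediate, since $P_{\mathcal{S}}\mathcal{R}$ is the column space of the submatrix $P_{\mathcal{S}}Q$, which equals $\mathbb{R}^{|\mathcal{S}|}$ iff $P_{\mathcal{S}}Q$ has full row rank $|\mathcal{S}|$, i.e. iff the selected rows of $Q$ are independent.

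Both claims then follow cleanly. The characterization shows that the simultaneously controllable subsets of $\mathcal{V}$ are precisely the independent sets of the vector matroid carried by the rows of $Q$; in particular the family is closed under taking subsets. By the equicardinality of matroid bases, every inclusion-maximal simultaneously controllable set, hence every admissible $\mathcal{C}$, has the same size, namely the row rank of $Q$, which equals $\mathrm{rank}(Q)=\dim\mathcal{R}=q$; this establishes (i) and also shows that ``maximal by inclusion'' and ``of maximum cardinality'' coincide, so Definition \ref{def:contr_obs_set} is unambiguous. For (ii) I would observe that whenever $0<q<N$ this matroid generically admits more than one basis, so $\mathcal{C}$ need not be unique, and then make the phenomenon concrete with the minimal example $A=0\in\mathbb{R}^{2\times 2}$, $B=(1,1)^{T}$: here $\mathcal{R}=\mathrm{span}\{(1,1)^{T}\}$ and $q=1$, each node is individually controllable while the two are not simultaneously controllable, yielding the two distinct maximal sets $\mathcal{C}=\{v_1\}$ and $\mathcal{C}=\{v_2\}$.

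The step I expect to require the most care is the characterization lemma, and specifically the handling of the quantifiers ``from any initial condition to any target value \dots\ in finite time''. The reachable projected set for a fixed horizon is only an affine translate of $P_{\mathcal{S}}\mathcal{R}$, so one must be careful not to conclude full controllability from the fact that a union of such translates over different horizons or initial states might cover $\mathbb{R}^{|\mathcal{S}|}$. The ``from any initial condition'' clause resolves this decisively: specializing to $x_0=0$ removes the affine offset and directly exposes the subspace constraint $P_{\mathcal{S}}\mathcal{R}=\mathbb{R}^{|\mathcal{S}|}$. Once this equivalence is secured, parts (i) and (ii) are routine consequences of finite-dimensional linear algebra together with the basic theory of linear matroids.
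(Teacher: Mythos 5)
Your proof is correct, but it takes a genuinely different route from the paper's. The paper works directly with a basis of the controllable subspace: it selects the basis containing the maximal number $p$ of versors $e_i$, completes it to the invertible matrix $T$ of eq.~\eqref{eq:contr_trasf_th}, and reads off from the block structure of $T^{-1}$ in eq.~\eqref{eq:T-1} that the $p+(q-p)=q$ coordinates $\bar x_p$, $\bar x_{q-p}$ can be assigned freely while $\bar x_{N-q}$ is then forced; non-uniqueness (ii) is attributed to the freedom in deciding which node variables sit in $\bar x_{q-p}$ versus $\bar x_{N-q}$, since the block $[F \ R]^{-1}$ is not diagonal. You instead reduce Definitions~\ref{def:contr}--\ref{def:contr_obs_set} to the rank condition $P_{\mathcal{S}}\mathcal{R}=\mathbb{R}^{|\mathcal{S}|}$, equivalently linear independence of the rows of the controllability matrix (your $Q$ is the paper's $K$ in eq.~\eqref{eq:contr_mat}) indexed by $\mathcal{S}$, and then invoke equicardinality of bases of the row matroid. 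Your route buys three things the paper's proof leaves implicit: first, the paper asserts without derivation that controllability of $\mathcal{C}$ amounts to extendability of $\bar x_{\mathcal{C}}$ to a point of the controllable subspace, whereas your variation-of-constants argument at $x_0=0$ actually derives this from Definition~\ref{def:contr}, correctly disposing of the affine offsets $e^{At}x_0$; second, the paper shows that $q$ node states can be freely assigned but never argues explicitly that no set of \emph{more} than $q$ nodes is simultaneously controllable, which your rank bound gives immediately; third, base equicardinality shows that inclusion-maximal and maximum-cardinality sets coincide, so Definition~\ref{def:contr_obs_set} is well-posed. For (ii), your explicit example ($A=0$, $B=(1,1)^T$, giving the two maximal sets $\lbrace v_1\rbrace$ and $\lbrace v_2\rbrace$) is sharper than the paper's appeal to non-diagonality of $[F \ R]^{-1}$; note that both treatments really establish that $\mathcal{C}$ \emph{need not} be unique --- it is unique, for instance, when the pair is completely controllable or when the controllable subspace is a coordinate subspace --- and your hedge ``generically admits more than one basis'' handles this correctly. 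What the paper's construction buys in exchange is the explicit transformation $T$, which anticipates the computational machinery of eqs.~\eqref{eq:R}--\eqref{eq:T_contr} used later to actually construct $\mathcal{C}$ and the perturbed set $\mathcal{P}$; interestingly, your rows-of-$K$ viewpoint is closer in spirit to that later computational treatment than to the paper's own proof of this proposition.
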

\begin{proof} We start by proving fact (i). Denote by $x_{\mathcal{C}}$ the stack vector of the state of the nodes in $\mathcal{C}$, and by $x_{\bar{\mathcal{C}}}$ the stack vector of the remainder of the network nodes. According to definitions \ref{def:contr} and \ref{def:contr_obs_set}, for the nodes of the set $\mathcal{C}$ to be controllable, given any assigned value of their states, say $\bar{x}_{\mathcal{C}}$, there must exist an assignment $\bar{x}_{\bar{\mathcal{C}}}$ of the vector $x_{\bar{\mathcal{C}}}$ such that $[\bar{x}_{\mathcal{C}} \ \bar{x}_{\bar{\mathcal{C}}}]$ defines a point in the controllable subspace of the pair $(A,B)$.

Take the basis, say $\mathcal{T}$, of the controllable subspace of the pair $(A,B)$ that maximizes the number $p$ of versors $e_i$ in the basis. Stacking together the $q$ column vectors encompassed in $\mathcal{T}$, and relabeling the network nodes accordingly (which can be done without loss of generality), we can build the matrix
\begin{equation}\label{eq:bas_to_mat}
\left[ \begin{array}{cc}
I_p & 0_{p\times q-p}  \\
0_{N-p\times p} & F_{ N-p\times q-p}
\end{array}\right],
\end{equation}
where each column of the block $F_{ N-p\times q-p}$ encompasses at least two nonzero entries, as otherwise additional versors $e_i$ could be included in $\mathcal{T}$.
Completing the matrix in eq. \eqref{eq:bas_to_mat} with $(N-q)$ additional columns that ensure the resulting matrix 
\begin{equation}\label{eq:contr_trasf_th}
T = \left[ \begin{array}{ccc}
I_p & 0_{p\times q-p} &0_{p\times N-q} \\
0_{N-p\times p} & F_{ N-p\times q-p} &R_{N-p\times N-q} 
\end{array}\right]
\end{equation}
is full rank\footnote{Note that the matrix $T$ is square by design.}, we obtain a controllability transformation $z = T^{-1}x$. As $T$ is block diagonal, then the matrix $T^{-1}$ has the structure
\begin{equation}\label{eq:T-1}
\left[ \begin{array}{cc}
I_p & 0_{p\times N-p} \\
0_{N-p\times p} & [F_{ N-p\times q-p} \ R_{N-p\times N-q}]^{-1}
\end{array}\right],
\end{equation}
where, in general, the block $[F_{ N-p\times q-p} \ R_{N-p\times N-q}]^{-1}$ is not diagonal as $[F_{ N-p\times q-p} \ R_{N-p\times N-q}]$ is not diagonal. Then, consider any vector $\bar x \in \mathbb{R}^N$ and subdivide it into three subvectors, i.e., $\bar x = [\bar x_p \ \bar x_{q-p} \ \bar x_{N-q}]^T$, where the subscripts denote the dimensions of each subvector. For $\bar x$ to define a point of the controllable subspace of the pair $(A,B)$, $\bar z=T^{-1}\bar x$ must have the structure $[\bar z_q \ 0 ]^T$, with $\bar z_q$ free to take any arbitrary value. Hence, from the structure of the matrix \eqref{eq:T-1}, we can conclude that the entries of $\bar x_p$ can be arbitrarily selected as well as that of $\bar x_{q-p}$, although fixing the latter forces to select the entries of $\bar x_{N-q}$ so to ensure that $\bar z= [\bar z_q \ 0 ]^T$ and thus, fact (i) holds true.
\newline Proving fact (ii) only requires noting that the selection of which node state variables to include in the subvector $\bar x_{q-p}$ (the entries of which can be arbitrarily selected) and which in $\bar x_{N-q}$ (the entries of which must be chosen to ensure $\bar z= [\bar z_q \ 0 ]^T$) is not unique, as the block $[F_{ N-p\times q-p} \ R_{p\times N-q}]^{-1}$ is not diagonal.
\end{proof}
Proposition \ref{lem:reach} implies that we can define a maximal set of controllable nodes $\mathcal{C}$, that is, a set of nodes whose state can be arbritrarily imposed starting from any initial condition and through an appropriate selection of the control signals $u$. As a result, the state of another set of nodes, is driven to a final value that cannot be arbitrarily imposed.
\begin{mydef}
We denote by $\mathcal{P}$ the set of perturbed nodes, that is, the set of nodes whose final values $\bar x_j \ \forall j:v_j \in \mathcal{P}$ are imposed when reaching a target state $\bar x_i \ \forall i: v_i \in \mathcal{C}$, 
\end{mydef}
Proposition 1 allows us to intoduce a definition of the controllable subspace of a complex network. 
\begin{mydef}
The network controllable subspace is $\mathrm{span}(\mathcal{C})$. 
\end{mydef}
\begin{myrem}Note that while the controllable subspace of a dynamical system is unique, from the non uniqueness of $\mathcal{C}$ proved in Proposition 1 we have the non uniqueness of the controllable subspaces of a complex network. Moreover, while the controllable subspace of a dynamical system identifies the directions along which the forced dynamics are confined, this is no longer true for the network controllable subspace.
\end{myrem}

Proposition \ref{lem:reach} states that $|\mathcal{C}|$ is equal to the dimension of the controllable subspace of the pair $(A,B)$. Nevertheless, it also states that there can be multiple different choices of the set $\mathcal{C}$, a fact that has been rarely exploited in the literature. Most existing works, see e.g. \cite{LiSlBa:12,LiGaSo}, rely on Hosoe's theorem \cite{Ho:80}, to find the set $\mathcal{C}$. As this theorem was designed to find the dimension of the controllable subspace of a dynamical system, applying it to complex networks only allows one to find one of the possibly multiple sets $\mathcal{C}$.

Now, we will turn our attention to node observability, a property which we will show cannot be treated through duality.
\begin{mythm}\label{thm:max_o}
The following three statements hold true:
\begin{itemize}
\item[i.] The maximal number of observable nodes $|\mathcal{O}|$ of a network always coincides with the largest number of elements $e_i$ of the basis $\mathcal{N}$ that are orthogonal to the non observable subspace of the pair $(A,C)$;
\item[ii.] the set of observable nodes $\mathcal{O}$ is unique;
\item[iii.] the set of observable nodes $\mathcal{O}$ is generic, in the sense that it does not vary depending on the nonzero entries of the matrix $A$, except for a set of Lebesgue measure zero.
\end{itemize}
\end{mythm}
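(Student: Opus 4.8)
The plan is to reduce node observability to a single linear-algebraic condition on each coordinate versor $e_j$, and then read off all three statements from that condition. First I would recall the standard characterization of indistinguishable initial states: since by Definition \ref{def:obsv} the control $u$ is known, its contribution to the output can be subtracted, so without loss of generality one may set $u=0$ and work with the free response $y(t)=Ce^{At}x(0)$. Two initial states produce identical outputs if and only if their difference lies in the non observable subspace $\mathcal{U}=\ker O$, with $O$ as in \eqref{eq:O_mat}. Node $v_j$ is observable precisely when $x_j(0)=e_j^Tx(0)$ is uniquely determined by the output, i.e. when $e_j^Td=0$ for every $d\in\mathcal{U}$. Thus node $v_j$ is observable if and only if $e_j$ is orthogonal to $\mathcal{U}$, equivalently $e_j$ lies in the row space of $O$. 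This is the bridge from Definitions \ref{def:obsv}--\ref{def:contr_obs_set} to linear algebra, and it is the step on which everything else rests.

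The decisive feature of this characterization, and the point where observability departs from the controllability analysis of Proposition \ref{lem:reach}, is that it is a \emph{decoupled}, per-node condition. I would stress that a set of nodes is simultaneously observable if and only if each of its nodes is individually observable: once the output determines $x_j$ for every $j$ in the set (as a linear functional of the output data), it determines all of them at once, with none of the mutual constraints that made the controllable set $\mathcal{C}$ non unique. Consequently the maximal observable set is forced to be $\mathcal{O}=\{v_j:e_j\perp\mathcal{U}\}$, which immediately yields statement (ii), uniqueness, since this set is pinned down element by element, and statement (i), since its cardinality is exactly the number of versors $e_i$ of $\mathcal{N}$ orthogonal to $\mathcal{U}$. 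I would also note that, unlike the controllability case where $|\mathcal{C}|=q$, here the count $|\mathcal{O}|$ need not equal $q=\dim\mathcal{U}^\perp$, because a $q$-dimensional subspace need not contain $q$ of the coordinate versors; this is exactly why $\mathrm{rank}(O)$ overcounts the observable nodes.

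For statement (iii) I would argue genericity through the rank reading of the orthogonality condition. Node $v_j$ is observable if and only if appending $e_j^T$ to $O$ does not raise the rank, that is $\mathrm{rank}\!\begin{bmatrix}O\\ e_j^T\end{bmatrix}=\mathrm{rank}(O)$. Every entry of $O$ is a polynomial in the free entries of $A$, indeed by \eqref{eq:path_weight} a sum of path weights in $\mathcal{G}^T$, while the appended row is constant. Hence each rank equals its generic value except on the common zero set of finitely many minor polynomials, each of which is either identically zero or nonvanishing off a set of Lebesgue measure zero. Taking the union over the finitely many nodes $v_j$ of these exceptional sets produces a single set of measure zero, outside of which the pair of ranks, and therefore the observability status of every node, takes its generic value, determined solely by the pattern of nonzero entries of $A$, equivalently by the paths in $\mathcal{G}^T$. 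This establishes that $\mathcal{O}$ is generic.

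The main obstacle I anticipate is not a computation but two conceptual points. The first is justifying rigorously that simultaneous observability reduces to individual observability, the asymmetry with controllability that is the conceptual heart of the theorem. The second is that the entries of $O$ are \emph{dependent} polynomials in the free parameters, so Lemma \ref{lem:gen_an} does not apply verbatim to $O$ itself; what is needed instead is the general principle that the rank of a polynomial matrix is generically constant, with exceptions confined to a measure-zero algebraic set. The path-weight reading of \eqref{eq:path_weight} is precisely what lets me assert that the resulting generic observable set depends only on the structure encoded in $\mathcal{G}^T$.
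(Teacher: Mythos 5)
Your proof is correct, and it arrives at the same pivotal fact as the paper --- node $v_j$ is observable iff $e_j$ is orthogonal to the non observable subspace, equivalently $e_j$ lies in the row space of $O$ --- but by a genuinely different route. The paper works forward from the Kalman decomposition: it takes a $q\times N$ matrix $T$ whose rows span the orthogonal complement of the non observable subspace, row-reduces and column-permutes it into the form $\tilde T$ of eq. \eqref{eq:T_tilde}, reads off the $p$ versors in the $\left[ I_p \ 0 \right]$ block as the observable nodes, proves maximality and uniqueness by contradiction from the block structure (a versor orthogonal to the non observable subspace would have to be a row combination of $\tilde T$, which the definition of $p$ forbids beyond the first $p$ rows), and proves genericity by invoking Lemma \ref{lem:gen_an} together with the observation of \cite{ShBP:75} that the dependencies induced by the powers of $A$ are themselves structural. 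You instead derive the orthogonality criterion directly from the indistinguishability definition of observability (subtract the known input, outputs coincide iff initial states differ by an element of $\ker O$), and then exploit a fact the paper never makes explicit: observability is a decoupled, per-node condition, so the maximal simultaneously observable set is forced to be $\lbrace v_j : e_j \perp \ker O\rbrace$. Statements (i) and (ii) then fall out with no transformation matrix and no contradiction argument, which makes uniqueness cleaner and makes the asymmetry with Proposition \ref{lem:reach} --- where joint constraints among coordinates destroy uniqueness of $\mathcal{C}$ --- fully transparent. For (iii), your minor-based argument (each minor of $O$ and of $O$ with the row $e_j^T$ appended is a polynomial in the free entries of $A$, hence either identically zero or nonvanishing off a measure-zero set, and a finite union over $j$ of measure-zero sets is measure zero) is more self-contained and, frankly, more rigorous than the paper's, which leans on Lemma \ref{lem:gen_an} plus a citation even though, as you correctly flag, that lemma does not apply verbatim to $O$ since its entries are dependent polynomials. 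What the paper's route buys in exchange is constructive content: the reduced matrix $\tilde T$ is essentially the object Algorithm \ref{alg:observability} computes, so the paper's proof doubles as a template for actually identifying $\mathcal{O}$, whereas your argument characterizes $\mathcal{O}$ without producing it. One small point worth adding to your write-up: since $\ker O$ is $A$-invariant, determining $x_j(0)$ is equivalent to determining $x_j(t)$ for all $t$, so your reduction to the initial state loses nothing relative to Definition \ref{def:obsv}.
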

\begin{proof}
i. The Kalman observability decomposition of the pair $(A,C)$ allows one to find the maximal set of tranformed state variables whose state can be reconstructed from the available measurements. These variables are obtained through a linear transformation $z=Tx$ where $T$ is a matrix of dimension $q\times N$ and its rows form span the $q$-dimensional orthogonal complement of the non observable subspace of the pair $(A,C)$. Then, it is possible to perform elementary row transformations on the matrix $T$ and permute its columns until we obtain the transformation
\begin{equation}\label{eq:T_tilde}
\tilde T = \left[ \begin{array}{cc}
I_p & 0_{p\times N-p} \\
0_{q-p\times p} & F_{ q-p\times N-p} 
\end{array}\right]. 
\end{equation}
where $p$ is the largest integer such that the first $p$ rows of the matrix $\tilde T$ are elements of $\mathcal{N}$. Hence, there exist $p$ node state variables $x_p$ that can be reconstructed from the first $p$ components of the vector $z$. Then, by definition of the integer $p$, no other element of $\mathcal{N}$ can be included in a matrix obtained from $T$ through elementary row transformations and thus is orthogonal to the non observable subspace of the pair $(A,C)$. Hence, no other node state variables can be extracted from the remainder $(q-p)$ elements of $z$.
\newline ii. We will prove this statement by contradiction. From statement i. we know that for a node $v_i$ to be observable, the versor $e_i$ must be orthogonal to the non observable subspace of the pair $(A,C)$. Now consider the set of linearly independent vectors composed of the rows of the matrix $\tilde T$ in eq. \eqref{eq:T_tilde}. As any vector orthogonal to the non observable subspace of the pair $(A,C)$ can be obtained from the rows of $\tilde T$ by means of elementary row transformations, it must be possible to extract $e_i$ from $\tilde T$. Still, this would be a contradiction as $e_i$ is orthogonal to each of the rows of the block $[I_p \ 0_{p\times N-p}]$, and cannot be obtained as a linear combination of the rows of the block $[0_{q-p\times p} \ F_{ q-p\times N-p}]$ from the definition of the scalar $p$.
\newline iii. To prove this statement, we start by noting that whether an element $e_i$ of the basis $\mathcal{N}$ is, or is not, orthogonal to the non observable subspace of the pair $(A,C)$ depends on the linear dependencies between the rows of the observability matrix. For structured matrices, from Lemma \ref{lem:gen_an} we know that any set of rows of a structured matrix are linearly independent if they each encompass an independent entry. While indeed, Lemma \ref{lem:gen_an} ignores the linear dependencies introduced by the powers of $A$ in the observability matrix, as was noted in \cite{ShBP:75}, these linear dependencies are dictated by the positions of the fixed and free entries of the matrix $A$ and thus are generic as well, thus ensuring the genericity of the set $\mathcal{O}$ and proving statement iii.
\end{proof}
Based on Theorem \ref{thm:max_o}, we can propose a definition of the observable subspace for a complex network alternative to the classic definition which holds for Interpretation 1 of eq. \eqref{eq:net_eq}.
\begin{mydef}
The set of observable network states is $\mathrm{span}(\mathcal{O})$, that is, the linear span of the maximum number of elements $e_i$ of the basis $\mathcal{N}$ that are orthogonal to the non observable subspace of the pair $(A,C)$. 
\end{mydef}
\begin{myrem}
An important difference is that while the set of vectors that span the orthogonal complement of the non observable subspace of a dynamical system does not necessarily define an invertible transformation $z_o = T^ox$, $\mathrm{span}(\mathcal{O})$ does.
\end{myrem}
\section{A decomposition of the network nodes}
Given the results in Proposition \ref{lem:reach} and Theorem \ref{thm:max_o}, we propose the following decomposition of the nodes of a linear dynamical network:
\begin{equation}\label{eq:decomp}
\left\lbrace
 \begin{array}{ccc}
\mathcal{C}\cap \mathcal{O}, & \mathcal{P} \cap \mathcal{O}, & \overline{ \mathcal{C}\cup \mathcal{P}} \cap \mathcal{O} \\ 
\mathcal{C}\cap \overline{\mathcal{O}}, & \mathcal{P} \cap \overline{\mathcal{O}}, & \overline{ \mathcal{C}\cup \mathcal{P}} \cap\overline{\mathcal{O}}
\end{array} 
\right\rbrace
\end{equation}
where $\mathcal{C}$ is the selected set of controllable nodes, $\mathcal{O}$ is the (unique) set of observable nodes and $\mathcal{P}$ is the set of perturbed nodes, that is, the nodes in the downstream of the drivers that are not in $\mathcal{C}$. Note that as the set $\mathcal{C}$ is not unique, so is the set $\mathcal{P}$. Substituting to $\mathcal{C}$, $\mathcal{O}$, and $\mathcal{P}$ the subspaces $\mathrm{span}(\mathcal{C})$, $\mathrm{span}(\mathcal{O})$, $\mathrm{span}(\mathcal{P})$ we obtain the decomposition of the network state space associated to the the partition of the network nodes in eq. \eqref{eq:decomp}.

Now, the question arises of how the sets $\mathcal{C}$, $\mathcal{P}$, and $\mathcal{O}$ can be computed. Let us start by showing how to compute the unique set of observable nodes $\mathcal{O}$. To this aim, define the matrix $Q$ as the matrix obtained by stacking together the first $q$ linearly independent rows of the observability matrix $O$. By permuting its columns, the matrix $Q$ can be decomposed as follows:
\begin{equation}\label{eq:Q_b}
Q = \left[ \begin{array}{cc}
H & F
\end{array}\right],
\end{equation}
where $H$ is a ${q\times q}$ full rank matrix, and the dimension of $F$ follows. Algorithm \ref{alg:observability} provides a way to find the set of observable nodes $\mathcal{O}$.
\begin{algorithm}[ht!]
\caption{}
\label{alg:observability}
\begin{algorithmic}
\State $k=1$
\State $Q^k = Q$, $H^k = H$, $F^k = F$, $q_k=q$.
\While{$F^k\neq \mathbf{0}$}
\State Compute $f_k = rank(F^k).$
\Repeat
\State Perform linear combinations of the rows of $Q^k$ 
\Until{the following structure is obtained:
\begin{equation}\label{eq:first_dec}
Q^k_b = \left[ \begin{array}{cc}
H_b^{k1} & F^{k1}\\
H_b^{k2} & F^{k2}
\end{array}\right],
\end{equation}
where $F^{k2}$ has full row rank equal to $f_k$ and $F^{k1} = \mathbf{0}$.}
\Repeat 
\State permute the first $q_k$ columns of the matrix $Q^k_b$ in eq. \eqref{eq:first_dec}
\Until{the following structure is obtained:
\begin{equation*}
\left[ \begin{array}{cc}
H_c^{k1}  &  H_c^{k3}\\
H_c^{k2}  &  H_c^{k4}
\end{array}\right],
\end{equation*}
where $H_c^{k1}$ must be a square full rank block of dimension $q_k-f_k$.}
\State $H^{k+1} \Leftarrow H^{k1}_c$;
\State $F^{k+1} \Leftarrow [H^{k3}_c \ F^{k1}]$;
\State $Q^{k+1} \Leftarrow [H^{k+1} \ F^{k+1}]$;
\State $k \Leftarrow k+1$;
\EndWhile
\State $\bar k = k-1$;
\State \textbf{The observable nodes are those associated to the columns of $H_c^{\bar k1}$}. 
\end{algorithmic}
\end{algorithm}
\begin{mythm}\label{thm:alg_pf}
Algorithm 1 is able to identify the set $\mathcal{O}$.
\end{mythm}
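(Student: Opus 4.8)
The plan is to recast node observability in purely linear-algebraic terms and then show that each sweep of the \textbf{while} loop of Algorithm \ref{alg:observability} preserves exactly the relevant information. Write $V$ for the $q$-dimensional row space of $Q$; since $Q$ collects a maximal independent set of rows of $O$, the subspace $V$ is precisely the orthogonal complement of the non observable subspace of $(A,C)$. By statement i. of Theorem \ref{thm:max_o}, a node $v_i$ is observable if and only if $e_i \in V$, so it suffices to prove that the columns finally selected in $H_c^{\bar k 1}$ are exactly the indices $i$ with $e_i \in V$.

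First I would set up the bookkeeping. Denote by $J_k$ the set of column indices sitting in the block $F^k$ at the start of iteration $k$, so that $J_1 = \{q+1,\dots,N\}$ and, because each sweep relabels the $f_k$ columns of $H_c^{k3}$ as free, $J_1 \subseteq J_2 \subseteq \cdots$. The central claim is the invariant
\begin{equation*}
\mathrm{rowspace}(Q^{k+1}) = \{\, v \in \mathrm{rowspace}(Q^k) : v|_{J_k} = 0 \,\}.
\end{equation*}
This follows from the row-reduction step: afterwards the top $q_k - f_k$ rows of $Q^k_b$ vanish on the $F^k$ columns while the bottom block $F^{k2}$ has full row rank $f_k$; hence any row-space vector that is zero on $J_k$ cannot use the discarded bottom rows and must be a combination of the retained top rows. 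Iterating gives $\mathrm{rowspace}(Q^{k+1}) = \{v \in V : v|_{J_k} = 0\}$. Termination is immediate, since $F^k \neq 0$ forces $f_k \geq 1$ and hence $q_{k+1} = q_k - f_k$ strictly decreases; at the exit $F^{\bar k+1} = 0$, so $Q^{\bar k + 1} = [\,H_c^{\bar k 1}\ \mathbf 0\,]$ with $H_c^{\bar k 1}$ square and full rank, whence $\mathrm{rowspace}(Q^{\bar k+1}) = \mathrm{span}(\{e_i : i \in \mathcal{O}_{\mathrm{alg}}\})$, where $\mathcal{O}_{\mathrm{alg}}$ is the column-index set of $H_c^{\bar k 1}$.

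Soundness is then automatic: the nested row spaces give $\mathrm{rowspace}(Q^{\bar k +1}) \subseteq V$, so each $e_i$ with $i \in \mathcal{O}_{\mathrm{alg}}$ lies in $V$ and $v_i$ is observable. The delicate half, which I expect to be the main obstacle, is completeness, because the choice of which $f_k$ columns to relabel as free is not canonical and one must rule out that an observable index is ever discarded. I would handle this by an auxiliary invariant: if $e_i \in V$ and column $i$ currently belongs to the $H^k$ block (so $i \notin J_k$), then $e_i|_{J_k} = 0$ and thus $e_i \in \mathrm{rowspace}(Q^{k+1})$, i.e.\ $e_i$ lies in the row space of the top block $[\,H_b^{k1}\ \mathbf 0\,]$. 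A unit vector in a row space must be supported on a pivot column: were column $i$ relabelled free, then in the reduced form $[\,I\ (H_c^{k1})^{-1}H_c^{k3}\,]$ every row-space vector would have its $i$-th entry determined by its pivot entries, forcing $e_i = 0$, contradicting the full rank of $H_c^{k1}$. Hence column $i$ is necessarily retained in $H^{k+1}$.

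Finally I would note that no initial free index can satisfy $e_i \in V$, by the same reduced-form argument applied directly to $Q^1 = [\,H\ F\,]$. Combining this with the invariant, an easy induction shows that every index with $e_i \in V$ starts in, and survives in, the $H$ block to the end, so it lies in $\mathcal{O}_{\mathrm{alg}}$. Together with soundness this yields $\mathcal{O}_{\mathrm{alg}} = \{i : e_i \in V\}$, which by Theorem \ref{thm:max_o} is exactly the unique (and generic) set $\mathcal{O}$, completing the argument.
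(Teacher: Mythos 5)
Your proof is correct, and it reaches the paper's conclusion by a genuinely different organization of the same underlying linear algebra. The paper argues globally on the accumulated staircase matrix $\bar Q$: soundness is read off from the full-rank block $H_c^{\bar k 1}$, and maximality is dispatched in one shot by observing that every block $H_c^{k4}$ and $F^{k2}$ with $k<\bar k$ has full row rank, so any elementary row transformation touching the lower rows of $\bar Q$ leaves at least two nonzero entries and no further versor $e_i$ can be extracted. You instead run an induction over the sweeps of the while loop, with the loop invariant $\mathrm{rowspace}(Q^{k+1}) = \lbrace v \in \mathrm{rowspace}(Q^k) : v|_{J_k} = 0\rbrace$, and split the theorem into soundness (the nested row spaces sit inside $V$, so every retained index gives $e_i \in V$) and completeness (the pivot-column argument showing that an index $i$ with $e_i \in V$ can neither start among the $F$ columns nor ever be relabelled free). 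Both proofs hinge on the same mechanism --- the full row rank of the $F^{k2}$ blocks forces the coefficients of the discarded rows to vanish in any representation of a candidate versor --- but your invariant formulation makes rigorous two points that the paper's terser ``at least two nonzero elements'' argument leaves implicit: that the non-canonical choices in the algorithm (which invertible submatrix $H$ is taken, which $f_k$ columns are moved to the free set) can never discard an observable index, and that no observable index can hide among the initial free columns in the first place. What the paper's global treatment buys in exchange is the explicit staircase form of $\bar Q$, which is reused immediately afterwards (Remark 3) to construct the transformation $T$ satisfying $z_i = x_i$ for $v_i \in \mathcal{O}$; your per-iteration bookkeeping would have to be unwound to recover that byproduct.
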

\begin{proof}
Denote by $\bar k$ the iteration at which Algorithm 1 ends, and by $\bar Q$ the matrix obtained from the matrix $Q$ in eq. \eqref{eq:Q_b} through the elementary transformations performed by Algorithm 1, that is,
\begin{equation}\label{eq:bar_Q}
\bar Q = \left[ \begin{array}{cccccc}
H_c^{\bar k 1} &  \mathbf{0} &\mathbf{0} &\mathbf{0} &\mathbf{0} &\mathbf{0}\\
H_c^{\bar k 2}  &  H_c^{\bar k4}  &F^{\bar k 2} &\mathbf{0} &\mathbf{0} &\mathbf{0}\\
\vdots               & \vdots	&  \vdots  	& \vdots            &    \vdots  &\vdots\\
\multicolumn{2}{c}{H_c^{k 2}} & H_c^{k 4}     & F^{k2} &\mathbf{0} &\mathbf{0}\\
\vdots               & \vdots	&  \vdots  	& \vdots            &    \vdots &\vdots\\
\multicolumn{4}{c}{H_c^{12}} & H_c^{14}     & F^{12}
\end{array}\right],
\end{equation}
where the symbol $\mathbf{0}$ stands to denote a zero matrix of suitable dimensions. Hence, its rows span the orthogonal complement of the observable subspace of the pair $(A,C)$. From Theorem \ref{thm:max_o}, we know that the observable network nodes are defined by the maximal set of elements of $\mathcal{N}$ that are orthogonal to the non observable subspace of the pair $(A,C)$. Hence, we must show that the block 
\begin{equation}\label{eq:part_span}
[H_c^{\bar k 1} \ H_c^{\bar k 3} \ F^{\bar k 1}] = [H_c^{\bar k 1} \ \mathbf{0} \ \mathbf{0}].
\end{equation}
of $\bar Q$ defines a set, say $\mathcal{H}$, of elements of $\mathcal{N}$, and that such set is maximal. The first task is trivial, as the block in eq. \eqref{eq:part_span} is composed of the full rank matrix $H_c^{\bar k 1}$ and of zero matrices. Hence, we now prove that no other element of $\mathcal{N}$ can be extracted from $\bar Q$ and added to $\mathcal{H}$. Indeed, this is not possible as in any block $[H_c^{k2} \ H_c^{k4} \ F^{k2} \ \mathbf{0} \ \mathbf{0}]$ with $k<\bar k$, both $H_c^{k4}$ and  $F^{k2}$ have full row rank by design, and thus, regardless of the elementary transformations performed on the matrix $\bar Q$, any of the rows of $\bar Q$ except for those of the block in eq. \eqref{eq:part_span} would encompass at least two nonzero elements. Hence,  the thesis follows.
\end{proof}
From classical systems theory, we know that if a dynamical system is not completely observable one can perform the Kalman transformation to obtain the observable subsystem. Indeed, this transformation is not unique. Still, if one takes the viewpoint of Interpretation 2 and aims at reconstructing the state of the nodes in the set $\mathcal{O}$, then amongst all possible alternatives, one must select a matrix $T$ that defines a transformation $z=Tx$ such that $z_i = x_i$ for all $i$ such that $v_i\in \mathcal{O}$. Algorithm 1 provides the fundamental block of such transformation $T$ as explained in the following remark.
\begin{myrem}
To obtain the transformation matrix $T$ such that $z_i = x_i$ we can take the matrix 
\begin{equation}\label{eq:first_transf}
\left[ \begin{array}{c}
\bar Q \\
\bar Q^{\perp}
\end{array}\right],
\end{equation}
where $Q^{\perp}$ is selected to ensure that the resulting matrix be full rank. Then, considering that the matrix in eq. \eqref{eq:first_transf} can be decomposed as 
\begin{equation*}
\left[ \begin{array}{ccc}
H^{\bar k1}_c \ \mathbf{0} \ \mathbf{0} \\
\quad  R \quad
\end{array}\right],
\end{equation*}
and as $H^{\bar k1}_c$ is full rank, we can perform elementary row operations on the rows of the matrix in eq. \eqref{eq:first_transf} to obtain the transformation
\begin{equation*}
T = \left[ \begin{array}{ccc}
\mathrm{I}_{|\mathcal{O}|} \ \mathbf{0} \ \mathbf{0} \\
\quad  R \quad
\end{array}\right],
\end{equation*}
where we have that $z_i = x_i$ for all $i$ for which $v_i\in \mathcal{O}$.
\end{myrem}
Having provided the tools to compute the set of observable nodes $\mathcal{O}$ we will turn our attention to the sets of controllable nodes $\mathcal{C}$ and perturbed nodes $\mathcal{P}$. Recall that from Proposition \ref{lem:reach} the set $\mathcal{C}$ is not unique. Hence, rather than a tool, we will provide an algebraic condition that must be verified for a set of nodes to be a suitable selection of the set $\mathcal{C}$. To do so, consider the controllability matrix
\begin{equation}\label{eq:contr_mat}
K = [B  \ AB \ A^2B \ \dots \ A^{N-1}B].
\end{equation}
As was the case for the observability matrix $O$, also the matrix $K$ admits a straightforward interpretation in terms of paths on the graph $\mathcal{G}$: the $i$-th element of the $j$-th column of the block $A^kB$ of the matrix $K$ is nonzero iff, in $\mathcal{G}$, there exists at least a path from the $j$-th driver to the node $v_i$. Hence, each row of $K$, say row $i$, is associated to a network node $v_i$. Performing elementary columns transformations on the matrix $K$ does note destroy this association. 

Given this premise, consider the matrix obtained by stacking together the first $rank(K)$ linearly independent columns of the matrix $K$. Then, permute its rows to obtain the following decomposition 
\begin{equation}\label{eq:R}
\left[ \begin{array}{c}
H\\
F
\end{array}\right],
\end{equation}
where $H$ is square and full rank. By performing elementary column transformations on the matrix in eq. \eqref{eq:R} (leveraging for instance Algorithm \ref{alg:observability}) one can obtain a matrix having the structure
\begin{equation}\label{eq:bar_R_b}
\left[ \begin{array}{cc}
I_h & 0\\
0 & R
\end{array}\right],
\end{equation}
where $h$ is the maximal number of elements of the basis $\mathcal{N}$ that can be included in a basis of the linear span of the columns of the matrix $K$, and as $I_h$ has dimension $h$, the dimensions of the full column rank matrix $R$ follow. Then, through additional elementary operations on its columns, and permuting its rows, we can turn the matrix in eq. \eqref{eq:bar_R_b} into the form
\begin{equation}\label{eq:bar_R_c}
\left[ \begin{array}{cc}
I_h &0\\
0 & R^{22}\\
0 &R^{32} 
\end{array}\right],
\end{equation}
where the matrix $R^{22}$ is square full rank. Note that the decomposition performed in eq. \eqref{eq:bar_R_c} of the matrix in eq. \eqref{eq:bar_R_b} is not unique, as there can be multiple ways of building the blocks $R^{22}$ and $R^{32}$. We must stress that given our premise, bulding these blocks in different ways leads to including into them elementary tranformations of different rows of $R$ and thus to associating to $R^{22}$ and $R^{32}$ different sets of nodes. By completing the matrix in eq. \eqref{eq:bar_R_c} with $N-rank(K)$ columns such that the resulting matrix,
\begin{equation}\label{eq:T_contr}
T = \left[ \begin{array}{ccc}
I_h &0 & 0\\
0 & R^{22} &0\\
0 &R^{32}  &T^{33}
\end{array}\right],
\end{equation}
is full rank\footnote{Note that the matrix $T$ is square by design.}, we obtain a controllability transformation $z = T^{-1}x$ where
\begin{equation}\label{eq:T_inv}
T^{-1} = \left[ \begin{array}{ccc}
I_h &0 & 0\\
0 & (R^{22})^{-1} &0\\
0 &\star  &(T^{33})^{-1}
\end{array}\right],
\end{equation}
and the block
\begin{equation}\label{eq:star}
\star = -(T^{33})^{-1}R^{32}(R^{22})^{-1}.
\end{equation}
Now, consider any vector $\bar x \in \mathbb{R}^N$ and, consistently with the structure of $T^{-1}$, subdivide it into three subvectors, i.e., $\bar x = [\bar x^T_h \ \bar x^T_{rank(K)-h} \ \bar x^T_{N-rank(K)}]^T$, where the subscripts denote their dimensions. Then, $\bar x$ is a reachable point of the network state space if $\bar z=T^{-1}\bar x$ has the structure $[\bar z^T_{rank(K)} \ 0^T_{N-rank(K)} ]^T$. Hence, we can conclude that in order to specify any reachable point
\begin{itemize}
\item the state of the nodes associated to the first $h$ columns of the matrix in eq. \eqref{eq:T_inv} can be arbitrarily selected\footnote{Note that when performing the inverse of the matrix $T$ the associations between rows of $T$ and network nodes, become associations between columns of $T^{-1}$ and network nodes, consistently with the equation $z=T^{-1}x$.};
\item the state of the nodes associated to the columns of the matrix in eq. \eqref{eq:T_inv} corresponding to the block $(R^{22})^{-1}$ can be arbitrarily selected; 
\item the selection of the state of the last $N-rank(K)$ network nodes must be made fulfilling the constraint
$$\star\, \bar x_{rank(K)-h} + (T^{33})^{-1}x_{N-rank(K)} = 0$$ 
which, from the expression of $\star$ in eq. \eqref{eq:star} implies that
\begin{equation}\label{eq:pert_node_cond}
\bar x_{N-rank(K)} =R^{32}(R^{22})^{-1} \bar x_{rank(K)-h}
\end{equation}
\end{itemize}
 The aforementioned arguments constitute the theoretical basis for the following proposition.
\begin{myprop}
The set of controllable network nodes $\mathcal{C}$ is obtained by the union of two subsets, say $\mathcal{C}^1$ and $\mathcal{C}^2$, where
\begin{itemize}
\item $\mathcal{C}^1$ is the set of $h$ nodes associated to the first $h$ rows of the matrix in eq. \eqref{eq:bar_R_c}, and is unique.;
\item $\mathcal{C}^2$ is composed of $(rank(K)-h)$ nodes associated to the rows of the block $R^{22}$ of the matrix in eq. \eqref{eq:bar_R_c} and thus, as the selection of $R^{22}$ is not unique, so is $\mathcal{C}^2$. 
\end{itemize}
The set $\mathcal{P}$ is composed of the nodes associated to the components of $\bar x_{N-rank(K)}$ that become generically different from zero through eq. \eqref{eq:pert_node_cond}. As the selection of $R^{32}$ depends on that of $R^{22}$, and as the latter is not unique, then $\mathcal{P}$ is not unique either.
\end{myprop}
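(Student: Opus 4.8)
The plan is to read the statement directly off the block structure of the transformation $T^{-1}$ in eq.~\eqref{eq:T_inv}, using as the only systems-theoretic input the reachability characterization already in hand. By Proposition~\ref{lem:reach}, $|\mathcal{C}|$ equals the dimension of the controllable subspace of $(A,B)$, which is exactly $rank(K)$, and a node is controllable precisely when its state can be assigned arbitrarily while the full state vector is completed to a point of the controllable subspace. Since the columns of $K$ span that subspace and $z=T^{-1}x$ is a Kalman controllability transformation (the first $rank(K)$ columns of $T$ span $\mathrm{span}(K)$, the remaining ones merely completing it to full rank), I would first record the test: a point $\bar x$ is reachable iff $\bar z=T^{-1}\bar x$ has the form $[\bar z^T_{rank(K)}\ 0^T]^T$, i.e.\ its last $N-rank(K)$ components vanish. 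Everything after this is linear algebra on the explicit blocks.

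Next I would partition $\bar x=[\bar x_h^T\ \bar x_{rank(K)-h}^T\ \bar x_{N-rank(K)}^T]^T$ conformally with eq.~\eqref{eq:T_inv} and compute $\bar z=T^{-1}\bar x$ block by block. The first $h$ components of $\bar z$ equal $\bar x_h$ through $I_h$ and are not constrained to vanish, so $\bar x_h$ is free; the next $rank(K)-h$ components equal $(R^{22})^{-1}\bar x_{rank(K)-h}$ and, because $R^{22}$ is square and full rank, letting them range freely is equivalent to letting $\bar x_{rank(K)-h}$ range freely; the last $N-rank(K)$ components are forced to zero, which, substituting $\star=-(T^{33})^{-1}R^{32}(R^{22})^{-1}$ and using the invertibility of $T^{33}$, collapses exactly to eq.~\eqref{eq:pert_node_cond}. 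Recalling that the row--node association of eq.~\eqref{eq:bar_R_c} becomes a column--node association in $T^{-1}$, the $h$ nodes of the $I_h$ block form $\mathcal{C}^1$ and the $rank(K)-h$ nodes of the $R^{22}$ block form $\mathcal{C}^2$; their union has cardinality $rank(K)=|\mathcal{C}|$, so $\mathcal{C}^1\cup\mathcal{C}^2$ is indeed a maximal controllable set.

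For the (non-)uniqueness claims I would argue intrinsically. The set $\mathcal{C}^1$ is exactly $\{v_i:e_i\in\mathrm{span}(K)\}$, a property of the subspace $\mathrm{span}(K)$ alone, and $h$ is the maximal number of canonical versors contained in a basis of it—every column of $R$ must carry at least two nonzero entries, for otherwise a further versor could be split off, exactly as argued for the block $F$ in the proof of Proposition~\ref{lem:reach}; hence $\mathcal{C}^1$ is uniquely determined. By contrast, turning eq.~\eqref{eq:bar_R_b} into eq.~\eqref{eq:bar_R_c} requires extracting a square full-rank sub-block $R^{22}$ from the rows of $R$, and this choice is not unique: distinct choices assign distinct node sets to $R^{22}$, whence $\mathcal{C}^2$ is not unique. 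Finally, eq.~\eqref{eq:pert_node_cond} shows that a node of $\bar x_{N-rank(K)}$ is forced to a nonzero value under generic free data $\bar x_{rank(K)-h}$ precisely when the corresponding row of $R^{32}(R^{22})^{-1}$ is not identically zero; these nodes constitute $\mathcal{P}$, the remaining ones constituting $\overline{\mathcal{C}\cup\mathcal{P}}$. Since $R^{32}$ is the block complementary to the non-unique $R^{22}$, the pattern of $R^{32}(R^{22})^{-1}$, and hence $\mathcal{P}$, depends on that choice and is not unique.

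The step I expect to require the most care is the genericity hidden in the word \emph{generically} in eq.~\eqref{eq:pert_node_cond}: I must ensure that a row of $R^{32}(R^{22})^{-1}$ being nonzero for \emph{some} admissible data is equivalent to its being nonzero for \emph{almost all} such data, and that this dichotomy is governed by the fixed/free pattern of $A$ and $B$ rather than by accidental cancellations. This is settled by the same structural reasoning underlying Lemma~\ref{lem:gen_an} and statement iii.\ of Theorem~\ref{thm:max_o}: a nonzero multilinear expression in the free parameters vanishes only on a set of Lebesgue measure zero, so ``nonzero for some choice'' and ``generically nonzero'' coincide. This both legitimizes the definition of $\mathcal{P}$ and confines its non-uniqueness to the freedom in selecting $R^{22}$.
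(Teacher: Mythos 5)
Your proposal is correct and follows essentially the same route as the paper, whose ``proof'' is the discussion preceding the proposition: building $T$ from eq.~\eqref{eq:bar_R_c}, reading the free blocks $\bar x_h$ and $\bar x_{rank(K)-h}$ and the constraint eq.~\eqref{eq:pert_node_cond} directly off the block structure of $T^{-1}$ in eq.~\eqref{eq:T_inv}, and tracing the non-uniqueness of $\mathcal{C}^2$ and $\mathcal{P}$ to the non-unique choice of $R^{22}$. Your two additions---identifying $\mathcal{C}^1$ intrinsically as $\lbrace v_i : e_i \in \mathrm{span}(K)\rbrace$ to prove its uniqueness, and the measure-zero argument making ``generically different from zero'' precise---are welcome refinements of points the paper leaves implicit, not deviations from its approach.
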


\section{Example}
As an example of application of Algorithm \ref{alg:observability}, we consider a network of $N=8$ nodes with dynamics
\begin{align}\label{eq:net_example}
&\dot x = \left[ \begin{array}{cccccccc}
0 & 0 & 0 & 0 & 3 & 0 & 0 & 7 \\ 
0 & 0 & 0 & 0 & 0 & 0 & 0 & 0 \\ 
0 & 0 & 2 & 0 & 0 & 0 & 0 & 0 \\ 
0 & 8 & 0 & 0 & 0 & 0 & 0 & 0 \\ 
0 & 0 & 1 & 1 & 0 & 0 & 0 & 0 \\
0 & 0 & 0 & 0 & 0 & 0 & 0 & 0 \\ 
0 & 0 & 0 & 0 & 0 & 0 & 0 & 0 \\ 
0 & 0 & 0 & 0 & 0 & 5 & 4 & 1
\end{array}  \right]x \\\nonumber
& y = \left[ \begin{array}{cccccccc}
1 & 0 & 0 & 0 & 0 & 0 & 0 & 0
\end{array}  \right]x 
\end{align}
which corresponds to the graph shown in Fig. \ref{fig:net_example}.
\begin{figure}[ht!]
\centering
\includegraphics[scale=0.5,trim={0cm 7cm 0cm 8cm}]{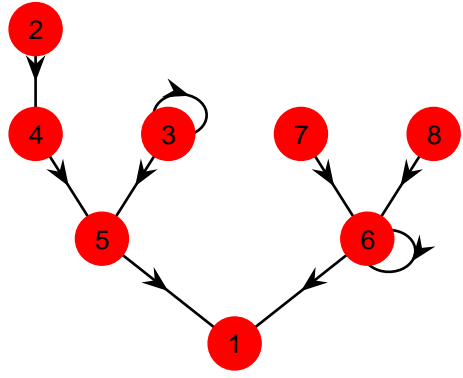}
\caption{Graph of the network in eq. \eqref{eq:net_example}.}
\label{fig:net_example}
\end{figure}
The observability matrix for the network in eq. \eqref{eq:net_example} is
\begin{equation}\label{eq:ex_obs_mat}
O = \left[ \begin{array}{cccccccc}
1 & 0 & 0 & 0 & 0 & 0 & 0 & 0 \\ 
0 & 0 & 0 & 0 & 3 & 0 & 0 & 7 \\ 
0 & 0 & 3 & 3 & 0 & 35 & 28 & 7 \\ 
0 & 24 & 6 & 0 & 0 & 35 & 28 & 7 \\ 
0 & 0 & 12 & 0 & 0 & 35 & 28 & 7 \\ 
0 & 0 & 24 & 0 & 0 & 35 & 28 & 7 \\ 
0 & 0 & 48 & 0 & 0 & 35 & 28 & 7 \\ 
0 & 0 & 96 & 0 & 0 & 35 & 28 & 7
\end{array}  \right],
\end{equation}
and we have that $q=rank(O)=6$. As in this example the matrix $C$ has only one row, we know from the Cayley-Hamilton theorem that the first $6$ rows of $O$ are linearly independent, and thus form the matrix $Q$. Given these premises, we can now use Algorithm 1 to find the set $\mathcal{O}$ of the network observable nodes. First, we set $Q^1=Q$, and note that the first six columns of the matrix $Q_1$ define a block with full rank. Then, we set
\begin{equation}\label{eq:H^1}
H^1= H = \left[ \begin{array}{cccccc}
1 & 0 & 0 & 0 & 0 & 0\\ 
0 & 0 & 0 & 0 & 3 & 7\\ 
0 & 0 & 3 & 3 & 0 & 7\\ 
0 & 24 & 6 & 0 & 0 & 7\\ 
0 & 0 & 12 & 0 & 0 & 7\\ 
0 & 0 & 24 & 0 & 0 & 7
\end{array}  \right], \quad 
F^1 = F = \left[ \begin{array}{cc}
0 & 0 \\ 
0 & 0 \\ 
28 & 35 \\ 
28 & 35 \\ 
28 & 35 \\ 
28 & 35
\end{array}  \right],
\end{equation}
As $f_1=1$, we perform the following elementary transformations on the rows $r_1,\dots,r_6$ of $Q^1= [H^1 \ F^1]$:
\begin{enumerate}
\item $r_3 \Leftarrow r_3-r_6$;
\item $r_4 \Leftarrow r_4-r_6$;
\item $r_5 \Leftarrow r_5-r_6$;
\end{enumerate}
to obtain the matrix 
\begin{equation}\label{eq:Q^1_b}
Q^1_b = \left[ \begin{array}{cccccccc} 
1 &0 &0 &0 &0 &0 &0 &0\\
0 &0 &0 &0 &3 &7 &0 &0\\
0 &0 &-21 &3 &0 &0 &0 &0\\
0 &24 &-18 &0 &0 &0 &0 &0\\
0 &0 &-12 &0 &0 &0 &0 &0\\
0 &0 &24 &0 &0 &7 &28 &35
\end{array}\right].
\end{equation}
Moreover, as $f_1 = 1$, we have that
\begin{equation}\label{eq:H1}
\left[\begin{array}{c}
H^{11}_b\\
H^{12}_b
\end{array} \right]
= \left[ \begin{array}{cccccc}
1 &0 &0 &0 &0 &0\\
0 &0 &0 &0 &3 &7\\
0 &0 &-21 &3 &0 &0\\
0 &24 &-18 &0 &0 &0\\
0 &0 &-12 &0 &0 &0\\
0 &0 &24 &0 &0 &7
\end{array}\right].
\end{equation}
As no column of the matrix in eq. \eqref{eq:H1} has its first five elements equal to zero, the exit condition of Algorithm 1 is not verified for $k=1$. Hence, we set $k\Leftarrow 2$ and
\begin{equation}\label{eq:Q2}
Q^2 = [H^2  \ F^2] \Leftarrow 
\left[ \begin{array}{cccccc}
1 &0 &0 &0 &0 &0\\
0 &0 &0 &0 &3 &7\\
0 &0 &-21 &3 &0 &0\\
0 &24 &-18 &0 &0 &0\\
0 &0 &-12 &0 &0 &0\\
\end{array}\right].
\end{equation}
As $f_2 = 1$, we perform the following elementary row tranformations on the matrix $Q^2$:
\begin{enumerate}
\item $r_5 \Leftarrow r_5+r_2$;
\item $r_2 \Leftarrow r_2-r_5$;
\end{enumerate}
to obtain the matrix
\begin{equation}\label{eq:Q2_b}
Q^2_b = \left[ \begin{array}{cccccc}
1 &0 &0 &0 &0 &0\\
0 &0 &12 &0 &0 &0\\
0 &0 &-21 &3 &0 &0\\
0 &24 &-18 &0 &0 &0\\
0 &0 &-12 &0 &3 &7
\end{array}\right].
\end{equation}
From the matrix in eq. \eqref{eq:Q2_b}, we can extract the block
\begin{equation}\label{eq:H2_c}
H^{21}_b = \left[ \begin{array}{cccccc}
1 &0 &0 &0 \ \ \vline &0 \\
0 &0 &12 &0 \ \ \vline  &0 \\
0 &0 &-21 &3 \ \ \vline &0 \\
0 &24 &-18 &0 \ \ \vline &0
\end{array}\right],
\end{equation}
in which the vertical line highlights that no column permutations are required to obtain $H^{23}_c= \mathbf{0}$ and thus the exit condition of Algorithm 1 is verified. 

Hence, while the rank of the observability matrix is equal to 6, the set of observable nodes is composed of only four nodes, that is, $\mathcal{O} = \lbrace v_1,  \ v_2, \ v_3, \ v_4\rbrace$. Then, to allow observing the state of the nodes of $\mathcal{O}$ we perform, on the rows of the matrix,
$$\bar Q = \left[\begin{array}{cccc}
H^{21}_c &0 &0 & 0\\
H^{22}_c &H^{24}_c &F^{22} &0\\
\multicolumn{2}{c}{H_c^{12}} &H^{14}_c & F^{12}
\end{array} \right]
$$
the following elementary transformations
\begin{enumerate}
\item $r_2  \Leftarrow (1/16)(r_2+(2/3)r_4)$;
\item $r_4 \Leftarrow r_4+(1/3)r_3 - 24 r_2$;
\item $r_3 \Leftarrow (1/54)(r_3- 3r_4)$,
\item $r_4 \Leftarrow r_4 + 25r_3$;
\item $r_5 \Leftarrow r_5 +12r_3$;
\item $r_6 \Leftarrow r_6 -24r_3$;
\end{enumerate}
thus obtaining the matrix 
\begin{equation}\label{eq:bar_Q}
\bar Q = \left[ \begin{array}{ccccccccc} 
1 &0 &0 &0 &\vline &0 &0 &0 &0\\
0 &1 &0 &0 &\vline &0 &0 &0 &0\\
0 &0 &1 &0 &\vline &0 &0 &0 &0\\
0 &0 &0 &1 &\vline&0 &0 &0 &0\\
\hline
0 &0 &0 &0 &\vline &3 &7 &0 &0\\
0 &0 &0 &0 &\vline &0 &7 &28 &35
\end{array}\right].
\end{equation}
Then, we complete the matrix $\bar Q$ with two additional rows that ensure the resulting matrix is full rank thus providing the Kalman observability transformation,
\begin{equation}\label{eq:bar_Q}
T = \left[ \begin{array}{cccccccc} 
1 &0 &0 &0 &0 &0 &0 &0\\
0 &1 &0 &0 &0 &0 &0 &0\\
0 &0 &1 &0 &0 &0 &0 &0\\
0 &0 &0 &1 &0 &0 &0 &0\\
0 &0 &0 &0 &3 &7 &0 &0\\
0 &0 &0 &0 &0 &7 &28 &35\\
0 &0 &0 &0 &4 &-3 &2 &4\\
0 &0 &0 &0 &5 &2 &4 &1
\end{array}\right].
\end{equation}
As the transformed state is $z=Tx$, the reader will notice that the first four rows of the matrix $T$ indicate that $z_1=x_1$, $z_2 = x_2$, $z_3 = x_3$, and $z_4=x_4$, and thus from knowledge of the transformed state variables $z_i \ i=1,\dots, 4$ one can reconstruct the values of the node state variables $x_i \ i=1,\dots, 4$. 

\section{Conclusions}
When the number of driver nodes is not sufficient to make a network completely controllable and that of the sensor nodes is not sufficient to reconstruct the state of all nodes, one should identify the nodes that can be driven to a desired state (the set ${\mathcal C}$ in this paper) and the set of the observable nodes (${\mathcal O}$). In this way each network node can be labeled as either controllable and observable, controllable but not observable, and so on, thus giving an idea of the achievable control goals for a given network configuration. When dealing with linear network dynamics described by the triplet $(A,B,C)$ one is tempted to apply the Kalman decomposition to solve this problem, making reference to the theory of structural controllability/observability.
Following this approach, the results in this paper lead to some unexpected and somehow surprising conclusions that can be summarized as follows:
\begin{itemize}
\item[-] the sets ${\mathcal C}$ and ${\mathcal O}$ are not directly obtainable from the reachability and observability system matrices but some sophisticated manipulation is required; 
\item[-] the labeling of the controllable network nodes cannot be done in a unique way. Indeed, although the number of nodes in ${\mathcal C}$ is always constant and equal to the rank of the system reachability matrix, different nodes of the same network can be selected to become members of the set ${\mathcal C}$;
\item[-] once we select a set ${\mathcal C}$ among all the possible alternatives, an additionals set of nodes can be identified, the set ${\mathcal P}$. These are the nodes that will be perturbed by the control signals and dragged to a nonzero (but known) value during the control action;
\item[-] contrarily to the case of the controllable nodes, it is impossible to state that the number of observable nodes of a network coincides with the number of observable states of the dynamical system described by the same triplet of matrices.
\end{itemize}

We have provided an algorithm that, based on algebraic manipulations of the observability and reachability system matrices, can be used to identify the sets ${\mathcal C}$, ${\mathcal O}$ and ${\mathcal P}$ thus enabling a partition of the network nodes using a proper combination of the following labels: ``controllable", ``perturbed'', ``observable"  ``uncontrollable",``unperturbed" and ``unobservable". 
This partition induces what we called the \textit{network state space decomposition}. 
Playing a similar role to that of the Kalman decomposition for the control of dynamical systems, but inherently different from it, this partition of the network nodes provides essential information for the control of a network.

\bibliographystyle{IEEEtran}
\bibliography{biblio}
\end{document}